\newtheorem{thm}{Theorem}[subsection]
\theoremstyle{plain}
\theoremstyle{definition}
\numberwithin {equation}{section}
\begin{document}
\title
{New modification of Maheshwari method with optimal eighth order
of convergence for solving nonlinear equations}
\author{Somayeh Sharifi$^a$\thanks{s.sharifi@iauh.ac.ir} \and Massimiliano Ferrara$^b$\thanks{massimiliano.ferrara@unirc.it}
 \and Mehdi Salimi$^c$\thanks{Corresponding author: mehdi.salimi@tu-dresden.de}\and Stefan Seigmund$^c$\thanks{stefan.siegmund@tu-dresden.de}}
\date{}
\maketitle
\begin{center}
$^{a}$Young Researchers and Elite Club, Hamedan Branch, Islamic
Azad University, Hamedan, Iran\\
$^{b}$ Department of Law and Economics, University Mediterranea of Reggio Calabria, Italy\\
$^{c}$Center for Dynamics, Department of Mathematics, Technische
Universit{\"a}t Dresden, Germany
\end{center}
\date{}
\maketitle

\begin{abstract}
\noindent In this paper, we present a family of three-point with
eight-order convergence methods for finding the simple roots of
nonlinear equations by suitable approximations and weight function
based on Maheshwari method. Per iteration this method requires
three evaluations of the function and one evaluation of its first
derivative. This class of methods has the efficiency index equal
to $8^{\frac{1}{4}}\approx 1.682$. We describe the analysis of the
proposed methods along with numerical experiments including
comparison with existing methods.

\medskip
\noindent \textbf{Keywords}: Multi-point iterative methods; Simple
root; Maheshwari method; Kung and Traub's conjecture.
\end{abstract}

\section{Introduction}
Finding roots of nonlinear functions $f(x)=0$ by using iterative
methods is a classical problem which has interesting applications
in different branches of science, in particular in physics and
engineering. Therefore, several numerical methods for
approximating simple roots of nonlinear equations have been
developed and analyzed by using various techniques based on
iterative methods in the recent years. The second order
Newton-Raphson method $x_{n+1} = x_n - \frac{f(x_n)}{f'(x_n)}$ is
one of the best-known iterative methods for finding approximate
roots and it requires two evaluations for each iteration step, one
evaluation of $f$ and one of $f'$ \cite{Ostrowski,Traub}.

Kung and Traub \cite{Kung} conjectured that no multi-point method
without memory with $n$ evaluations could have a convergence order
larger than $2^{n-1}$. A multi-point method with convergence order
$2^{n-1}$ is called optimal. The efficiency index provides a
measure of the balance between those quantities, according to the
formula $p^{1/n}$, where $p$ is the convergence order of the
method and $n$ is the number of function evaluations per
iteration.

Some well known two-point methods without memory are described by
e.g.\ Jarratt \cite{Jarratt}, King \cite{King}, Maheshwari
\cite{Mah} and Ostrowski \cite{Ostrowski}. Using inverse
interpolation, Kung and Traub \cite{Kung} constructed two general
optimal classes without memory. Since then, there have been many
attempts to construct optimal multi-point methods, utilizing e.g.\
weight functions \cite{Bi}, \cite{Chun1}, \cite{Cordero1},
\cite{Lotfi1}, \cite{Lotfi2}, \cite{Petkovic}, \cite{Salimi1},
\cite{Sharma1}, \cite{Thukral} and \cite{Wang}.

In this paper, we construct a new class of optimal eight order of
convergence based on Maheshwari method. This paper is organized as
follows: Section 2 is devoted to the construction and convergence
analysis of the new class. In Section 3, the new methods are
compared with a closest competitor in a series of numerical
examples, and Section 4 contains a short conclusion.

\section{Description of the method and convergence analysis}
\subsection{Three-point method of optimal order of convergence}
In this section we propose a new optimal three-point method based
on Maheshwari method \cite{Mah} for solving nonlinear equations.
The Maheshwari method is given by
\begin{equation}\label{a1}
\begin{cases}
y_{n}=x_{n}-\frac{f(x_{n})}{f'(x_{n})},\\
x_{n+1}=x_n+\frac{1}{f'(x_n)}\left(\frac{f^2(x_n)}{f(y_n)-f(x_n)}-\frac{f^2(y_n)}{f(x_n)}\right),\quad
(n=0, 1, \ldots),
\end{cases}
\end{equation}
where $x_0$ is an initial approximation of $x^{*}$. The
convergence order of (\ref{a1}) is four with three functional
evaluations per iteration such that this method is optimal. We
intend to increase the order of convergence of method (\ref{a1})
by an additional Newton step. So we obtain
\begin{equation}\label{a2}
\begin{cases}
y_{n}=x_{n}-\frac{f(x_{n})}{f'(x_{n})},\\
z_n=x_n+\frac{1}{f'(x_n)}\left(\frac{f^2(x_n)}{f(y_n)-f(x_n)}-\frac{f^2(y_n)}{f(x_n)}\right),\\
x_{n+1}=z_n-\frac{f(z_n)}{f'(z_n)}.
\end{cases}
\end{equation}
Method (\ref{a2}) uses five function evaluations with order of
convergence eight. Therefore, this method is not optimal. In order
to decrease the number of function evaluations, we approximate
$f'(z_n)$ by an expression based on $f(x_n)$, $f(y_n)$, $f(z_n)$
and $f'(x_n)$. Therefore

\begin{equation}
f'(z_n)\approx\frac{f'(x_n)}{F(x_n.y_n,z_n)H(s_n)},
\end{equation}
where\\
\begin{equation}\label{11}
F(x_n,y_n,z_n)=\left(\frac{f^3(y_n)(f(x_n)-10f(y_n))+4f^2(x_n)(f^2(y_n)+f(x_n)f(y_n))}{f(x_n)(2f(x_n)-f(y_n))^2(f(y_n)-f(z_n))}\right),
\end{equation}
\\
and $s_n=\frac{f(z_n)}{f(x_n)}$.\\
We have
\begin{equation}\label{c3}
\begin{cases}
y_{n}=x_{n}-\frac{f(x_{n})}{f'(x_{n})},\\
z_n=x_n+\frac{1}{f'(x_n)}\left(\frac{f^2(x_n)}{f(y_n)-f(x_n)}-\frac{f^2(y_n)}{f(x_n)}\right),\\
x_{n+1}=z_n-\frac{f(z_n)}{f'(x_n)}F(x_n,y_n,z_n)H(s_n),
\end{cases}
\end{equation}
where $F(x_n,y_n,z_n)$ and $s_n$ are defined as above.\\
\subsection{Convergence analysis}
In the following theorem we provide sufficient conditions on the
weight function $H(s_n)$ which imply that method (\ref{c3}) has
convergence order eight.

\begin{thm}\label{t1}
Assume that function $ f: D \rightarrow \mathbb{R}$ is eight times
continuously differentiable on an interval $D\subset\mathbb{R}$
and has a simple zero $x^{*}\in D$. Moreover, $H$ is one time
continuously differentiable. If the initial approximation $x_0$ is
sufficiently close to $x^{*}$ then the class defined by \eqref{c3}
converges to $x^{*}$ and the order of convergence is eight under
the conditions
\begin{equation*}
   \begin{split}
    H(0)&=1, \quad H^{'}(0)=2,
   \end{split}
\end{equation*}
with the error equation
\begin{equation*}
e_{n+1}=\left(\frac{1}{2}c_2^2(4c_2^2-c_3)(c_2^3-8c_2c_3+2c_4)\right)e_n^8+O(e_n^9).
\end{equation*}
\end{thm}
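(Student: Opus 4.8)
The plan is to run the standard error-equation analysis around the simple zero $x^*$, expressing every quantity as a power series in $e_n := x_n - x^*$. Set $c_k := \frac{f^{(k)}(x^*)}{k!\,f'(x^*)}$ for $k\ge 2$, so that $f(x_n) = f'(x^*)\bigl(e_n + c_2 e_n^2 + c_3 e_n^3 + \cdots\bigr)$ and $f'(x_n) = f'(x^*)\bigl(1 + 2c_2 e_n + 3c_3 e_n^2 + \cdots\bigr)$. Plugging these into the first line of \eqref{c3} yields the Newton error $e_n^{y} := y_n - x^* = c_2 e_n^2 - 2(c_2^2 - c_3)e_n^3 + \cdots = O(e_n^2)$, and composing the Taylor series of $f$ with this series gives $f(y_n)$ as a power series in $e_n$.

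Next I would substitute $f(x_n)$, $f(y_n)$ and $f'(x_n)$ into the second line of \eqref{c3}. Since that sub-step is precisely the Maheshwari iteration, which has order four (as noted after \eqref{a1}), one gets $e_n^{z} := z_n - x^* = O(e_n^4)$; I would record its leading coefficient explicitly, since it is a polynomial in $c_2, c_3, c_4$ from which the factors appearing in the claimed error equation ultimately arise. Then I expand $f(z_n)$ to the order needed, form $s_n = f(z_n)/f(x_n) = O(e_n^3)$ (because $f(z_n) = O(e_n^4)$ while $f(x_n) = O(e_n)$), and use the $C^1$-regularity of $H$ to write $H(s_n) = H(0) + H'(0)\,s_n + O(e_n^6)$, so that only $H(0)$ and $H'(0)$ enter to the order that matters.

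Separately, I would expand the rational function $F(x_n,y_n,z_n)$ of \eqref{11} as a power series in $e_n$; the denominators $f(x_n)$, $2f(x_n)-f(y_n)$, $f(y_n)-f(z_n)$ are all nonzero near $x^*$, and one checks that $F \to 1$ as $e_n \to 0$. Multiplying, the quantity $\frac{f(z_n)}{f'(x_n)}F(x_n,y_n,z_n)H(s_n)$ becomes an explicit series in $e_n$ whose coefficients are polynomials in the $c_k$ and in $H(0), H'(0)$. Subtracting it from $e_n^{z}$ as in the third line of \eqref{c3} gives $e_{n+1}$ as a power series in $e_n$.

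Finally, I would require the coefficients of $e_n^4, e_n^5, e_n^6, e_n^7$ in $e_{n+1}$ to vanish. Two of the resulting relations are satisfied automatically by the construction of $F$; the remaining two are equivalent to $H(0) = 1$ and $H'(0) = 2$. With these imposed, the surviving $e_n^8$ coefficient simplifies to $\tfrac12 c_2^2(4c_2^2 - c_3)(c_2^3 - 8c_2 c_3 + 2c_4)$, which gives the stated error equation and hence convergence order eight. The one genuine obstacle is the size of the symbolic computation: the expansion of $F$ combined with that of $f(z_n)$ produces very large polynomial expressions, so the bookkeeping is best carried out with a computer algebra system, and one must retain enough terms in the expansions of $f(y_n)$, $f(z_n)$ and $F$ (effectively through order $e_n^8$) so that no contribution to the $e_n^8$ term is lost.
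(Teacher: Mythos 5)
Your proposal follows essentially the same route as the paper's own proof: Taylor expansions of $f(x_n)$, $f'(x_n)$, $f(y_n)$, $f(z_n)$ about $x^*$, the observation that the Maheshwari sub-step gives $e_{n,z}=O(e_n^4)$, the expansions of $F$, $s_n$ and $H(s_n)=H(0)+H'(0)s_n+\cdots$, and then annihilating the coefficients of $e_n^4$ through $e_n^7$ (two of which vanish automatically, the other two forcing $H(0)=1$, $H'(0)=2$), leaving the stated $e_n^8$ coefficient. This matches the paper's computation, which likewise reduces to the symbolic bookkeeping you defer to a computer algebra system.
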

\begin{proof}
 Let $e_{n}:=x_{n}-x^{*}$, $e_{n,y}:=y_{n}-x^{*}$,
$e_{n,z}:=z_{n}-x^{*}$ and
$c_{n}:=\frac{f^{(n)}(x^{*})}{n!f^{'}(x^{*})}$ for $n\in \mathbb{N}$.
Using the fact that $f(x^{*})=0$, Taylor expansion of $f$ at
$x^{*}$ yields
\begin{equation}\label{a10}
f(x_{n}) = f^{'}(x^*)\left(e_{n} +
c_{2}e_{n}^{2}+c_{3}e_{n}^{3}+\cdots+c_{8}e_{n}^{8}\right)+O(e_n^{9}),
\end{equation}
and
\begin{equation}\label{a11}
f^{'}(x_{n}) = f^{'}(x^*)\left(1 +
2c_{2}e_{n}+3c_{3}e_{n}^{2}+4c_{4}e_{n}^{3}+\cdots+9c_9e_n^8\right)+O(e_n^{9}).
\end{equation}
Therefore
\begin{equation*}
\begin{split}
\frac{f(x_{n})}{f'(x_{n})}&=e_{n}-c_{2}e_{n}^{2}+\left(2c_{2}^{2}-2c_{3}\right)e_{n}^{3}+(-4c_2^3+7c_2c_3-3c_4)e_n^4\\
&+(8c_2^4-20c_2^2c_3+6c_3^2+10c_2c_4-4c_5)e_n^5\\
&+(-16c_2^5+52c_2^3c_3-28c_2^2c_4+17c_3c_4-c_2(33c_3^2-13c_5))e_n^6+O(e_n^{7}),
\end{split}
\end{equation*}
and hence
\begin{equation*}
\begin{split}
e_{n,y}&= y_n-x^*=c_{2}e_{n}^{2}+(-2c_2^2+2c_3)e_n^3+(4c_2^3-7c_2c_3+3c_4)e_n^4\\
&+(-8c_2^4+20c_2^2c_3-6c_3^2-10c_2c_4+4c_5)e_n^5\\
&+(16c_2^5-52c_2^3c_3+28c_2^2c_4-17c_3c_4+c_2(33c_3^2-13c_5))e_n^6+O(e_n^{7}).
\end{split}
\end{equation*}
For $f(y_n)$ we also have
\begin{equation}\label{a12}
f(y_{n}) = f^{'}(x^*)\left(e_{n,y} +
c_{2}e_{n,y}^{2}+c_{3}e_{n,y}^{3}+\cdots+c_{8}e_{n,y}^{8}\right)+O(e_{n,y}^{9}).
\end{equation}
Therefore  by substituting (\ref{a10}), (\ref{a11}) and
(\ref{a12}) into (\ref{a2}), we get
\begin{equation*}
\begin{split}
e_{n,z}&= z_n-x^*=(4c_2^3-c_2c_3)e_{n}^{4}+
(-27c_2^4+26c_2^2c_3-2c_3^2-2c_2c_4)e_n^5\\
&+\left(120c_2^5-196c_2^3c_3+39c_2^2c_4-7c_3c_4+c_2(54c_3^2-3c_5)\right)e_n^6+O(e_n^7).
\end{split}
\end{equation*}
For $f(z_n)$ we also get
\begin{equation}\label{a122}
f(z_{n}) = f^{'}(x^*)\left(e_{n,z} +
c_{2}e_{n,z}^{2}+c_{3}e_{n,z}^{3}+\cdots+c_{8}e_{n,z}^{8}\right)+O(e_{n,z}^{9}).
\end{equation}
From (\ref{a10}), (\ref{a12}) and (\ref{a122}) we obtain
\begin{equation}\label{a13}
\begin{split}
F(x_n,y_n,z_n)&=1+2c_2e_n+3c_3e_n^2+(-8c_2^3+2c_2c_3+4c_4)e_n^3+(\frac{83}{2}c_2^4-45c_2^2c_3+4c_3^2+3c_2c_4\\
&+5c_5)e_n^4+(\frac{7167}{8}c_2^6+1731c_2^4c_3-56c_3^3+429c_2^3c_4-245c_2c_3c_4+9c_4^2+c_2^2(815c_3^2\\
&-84c_5)+16c_3c_5)e_n^6+O(e_n^7).
\end{split}
\end{equation}
From (\ref{a10}) and (\ref{a122}) we have
\begin{equation}\label{a13}
\begin{split}
s_n&=(4c_2^3-c_2c_3)e_n^3+(-31c_2^4+27c_2^2c_3-2c_3^2-2c_2c_4)e_n^4+(151c_2^5\\
&-227c_2^3c_3+41c_2^2c_4-7c_3c_4+c_2(57c_3^2-3c_5))e_n^5+(592c_2^6+1266c_2^4c_3\\
&+38c_3^3-325c_2^3c_4+170c_2c_3c_4-6c_4^2-10c_3c_5+c_2^2(-608c_3^2+55c_5))e_n^6+O(e_n^7).
\end{split}
\end{equation}

Expanding $H$ at $0$ yields
\begin{equation}\label{a14}
H(s_n)=H(0)+H^{'}(0)s_n+O(s_n^{2}).
\end{equation}
Substituting (\ref{a10})-(\ref{a14}) into (\ref{c3}) we obtain
\begin{equation*}
e_{n+1}=x_{n+1}-x^*=R_4e_n^4+R_5e_n^5+R_6e_n^6+R_7e_n^7+R_8e_n^8+O(e_n^9),
\end{equation*}
where
\begin{equation*}
\begin{split}
R_4&=-c_2(4c_2^2-c_3)(-1+H(0)),\\
R_5&=0,\\
R_6&=0,\\
R_7&=-c_2^2(-4c_2^2+c_3)^2(-2+H'(0)).
\end{split}
\end{equation*}
By setting $R_4=R_7=0$ and $R_8\neq0$ the convergence order
becomes eight. Obviously
\begin{equation*}
\begin{split}
H(0)=1 \quad \Rightarrow \quad R_4&=0,\\
H'(0)=2 \quad \Rightarrow \quad R_7&=0,
\end{split}
\end{equation*}
consequently the error equation becomes
\begin{equation*}
e_{n+1}=\left(\frac{1}{2}c_2^2(4c_2^2-c_3)(c_2^3-8c_2c_3+2c_4)\right)e_n^8+O(e_n^9),
\end{equation*}
which finishes the proof of the theorem.
\end{proof}

In what follows we give some concrete explicit representations of
\eqref{c3} by choosing different weight function satisfying the
provided condition for the weight function $H(s_n)$ in Theorem
\ref{t1}.

\textbf{Method 1.} Choose the weight function $H$ as:
\begin{equation}\label{b11}
H(s)=1+2s.
\end{equation}
The function $H$ in (\ref{b11}) satisfies the assumptions of
Theorem \ref{t1} and we get
\begin{equation}\label{b1}
\begin{cases}
y_{n}=x_{n}-\frac{f(x_{n})}{f'(x_{n})},\\
z_n=x_n+\frac{1}{f'(x_n)}\left(\frac{f^2(x_n)}{f(y_n)-f(x_n)}-\frac{f^2(y_n)}{f(x_n)}\right),\\
x_{n+1}=z_n-\frac{f(z_n)}{f'(x_n)}(\frac{f(x_n)+2f(z_n)}{f(x_n)})F(x_n,y_n,z_n),
\end{cases}
\end{equation}
where $F(x_n,y_n,z_n)$ is evaluated by (\ref{11}).

\textbf{Method 2.} Choose the weight function $H$ as:
\begin{equation}\label{b22}
H(s)=\frac{1+4 s}{1+2s}.
\end{equation}
The function $H$  in (\ref{b22}) satisfies the assumptions of
Theorem \ref{t1} and we obtain
\begin{equation}\label{b2}
\begin{cases}
y_{n}=x_{n}-\frac{f(x_{n})}{f'(x_{n})},\\
z_n=x_n+\frac{1}{f'(x_n)}\left(\frac{f^2(x_n)}{f(y_n)-f(x_n)}-\frac{f^2(y_n)}{f(x_n)}\right),\\
x_{n+1}=z_n-\frac{f(z_n)}{f'(x_n)}(\frac{f(x_n)+4f(z_n)}{f(x_n)+2f(z_n)})F(x_n,y_n,z_n),
\end{cases}
\end{equation}
where $F(x_n,y_n,z_n)$ is evaluated by (\ref{11}).

\textbf{Method 3.} Choose the weight function $H$ as:
\begin{equation}\label{b33}
H(s)=\frac{1}{1-2 s}.
\end{equation}
The function $H$  in (\ref{b33}) satisfies the assumptions of
Theorem \ref{t1} and we get
\begin{equation}\label{b3}
\begin{cases}
y_{n}=x_{n}-\frac{f(x_{n})}{f'(x_{n})},\\
z_n=x_n+\frac{1}{f'(x_n)}\left(\frac{f^2(x_n)}{f(y_n)-f(x_n)}-\frac{f^2(y_n)}{f(x_n)}\right),\\
x_{n+1}=z_n-\frac{f(z_n)}{f'(x_n)}(\frac{f(x_n)}{f(x_n)-2f(z_n)})F(x_n,y_n,z_n),
\end{cases}
\end{equation}
where $F(x_n,y_n,z_n)$ is evaluated by (\ref{11}).

We apply the new methods (\ref{b1}), (\ref{b2}) and (\ref{b3}) to
several benchmark examples and compare them with existing
three-point methods which have the same order of convergence and
the same computational efficiency index equal to $\sqrt[\theta]{r}
= 1.682$ for the convergence order $r=8$ which is optimal for
$\theta = 4$ function evaluations per iteration \cite{Ostrowski,
Traub}.
\section{Numerical performance}
In this section we test and compare our proposed methods with some
existing methods. We compare methods (\ref{b1}), (\ref{b2}) and
(\ref{b3}) with the following related three-point methods.\\

\textbf{ Bi, Ren and Wu method.} The method by Bi et al.\
\cite{Bi} is given by
\begin{equation}\label{d1}
\begin{cases}
y_{n}=x_{n}-\frac{f(x_{n})}{f'(x_{n})},\\[0.7ex]
z_{n}=y_{n}-\frac{f(y_{n})}{f'(x_{n})}\cdot\frac{f(x_n)+\beta f(y_n)}{f(x_n)+(\beta-2)f(y_n)} ,\\[0.7ex]
x_{n+1}=z_{n}- \frac{f(z_n)}{f[z_n, y_n]+f[z_n, x_n,
x_n](z_n-y_n)}H(t),
\end{cases}
\end{equation}
where $\beta=-\frac{1}{2}$ and weight function
\begin{equation}\label{d2}
H(t)=\frac{1}{(1-\alpha t)^2}, \quad \alpha=1.
\end{equation}

\textbf{Wang and  Liu method.} The method by Wang and  Liu
\cite{Wang} is given by
\begin{equation}\label{d3}
\begin{cases}
y_n=x_n-\frac{f(x_n)}{f'(x_n)},\\[0.7ex]
z_n=x_n-\frac{f(x_n)}{f'(x_n)}~G(t),\\[0.7ex]
x_{n+1}=z_n-\frac{f(z_n)}{f'(x_n)}\left(H(t)+V(t)W(s)\right),
\end{cases}
\end{equation}
with weight functions
\begin{equation}\label{d4}
G(t)=\frac{1-t}{1-2t}, \quad H(t)=\frac{5-2t+t^2}{5-12t}, \quad
V(t)=1+4t, \quad W(s)=s.
\end{equation}

\textbf{Sharma and Sharma method.} The Sharma and  Sharma method
\cite{Sharma1} is given by
\begin{equation}\label{d5}
\begin{cases}
y_n=x_n-\frac{f(x_n)}{f'(x_n)},\\[0.7ex]
z_n=y_n-\frac{f(y_n)}{f'(x_n)}\cdot\frac{f(x_n)}{f(x_n)-2f(y_n)},\\[0.7ex]
x_{n+1}=z_n-\frac{f[x_n,y_n]f(z_n)}{f[x_n,z_n]f[y_n,z_n]}~W(t),
\end{cases}
\end{equation}
with weight function
\begin{equation}\label{d6}
W(t)=1+\frac{t}{1+\alpha t}, \quad \alpha=1.
\end{equation}

The three point method (\ref{c3}) is tested on a number of
nonlinear equations. To obtain a high accuracy and avoid the loss
of significant digits, we employed multi-precision arithmetic with
7000 significant decimal digits in the programming package of
Mathematica 8.\\
In order to test our proposed methods (\ref{b1}), (\ref{b2}) and
(\ref{b3}) and also compare them with the methods \eqref{d1},
\eqref{d3} and \eqref{d5} we choose the initial value $x_0$ using
the \texttt{Mathematica} command \texttt{FindRoot} \cite[pp.\
158--160]{Hazrat} and compute the error and the approximated
computational order of convergence, (ACOC) by the formula
\cite{acoc}
\[
\textup{ACOC}\approx\frac{\ln|(x_{n+1}-x_{n})/(x_{n}-x_{n-1})|}{\ln|(x_{n}-x_{n-1})/(x_{n-1}-x_{n-2})|}.
\]
\begin{table}[h!]
\begin{center}
\begin{tabular}{ c c c c c c c } \hline
M & W-F & $~~~~\vert x_{1} -x^{*} \vert~~~~$ & $~~~~\vert  x_{2} -
x^{*} \vert~~~~$ & $~~~~\vert
x_{3} - x^{*} \vert~~~~$ &$~~~~\vert x_{4} -x^{*} \vert~~~~$ & ACOC\\
\hline
$(\ref{b1})$ &$(\ref{b11})$ & $~~0.937e-8$ &$0.655e-63~~$ & $0.374e-504$ & $0.865e-1913$ & $8.0000$\\
$(\ref{b2})$ &$(\ref{b22})$ & $~~0.568e-4$ &$0.145e-30~~$ & $0.259e-243$ & $0.272e-1945$ & $8.0000$ \\
$(\ref{b3})$ &$(\ref{b33})$ & $~~0.755e-4$ &$0.141e-29~~$ & $0.206e-235$ & $0.423e-1882$ & $8.0000$ \\
$(\ref{d1})$ &$(\ref{d2})$ & $~~0.720e-4$ &$0.584e-30~~$ & $0.110e-238$ & $0.175e-1908$ & $8.0000$ \\
$(\ref{d3})$ &$(\ref{d4})$ & $~~0.278e-3$ &$0.779e-26~~$ & $0.296e-206$ & $0.128e-1649$ & $8.0000$\\
$(\ref{d5})$ &$(\ref{d6})$ & $~~0.753e-4$ &$0.619e-31~~$ & $0.128e-247$ & $0.453e-1981$ & $8.0000$\\
\hline
\end{tabular}\end{center}
\vspace*{-3ex} \caption{Comparison for
$f(x)=\ln(1+x^2)+e^{x^2-3x}\sin(x), x^{*}=0, x_0=0.35$, for
different methods (M) and weight functions (W-F).\label{table1}}
\end{table}
\hspace{1cm}
\begin{table}[h!]
\begin{center}
\begin{tabular}{ c c c c c c c } \hline
 M & W-F & $~~~~\vert x_{1} -x^{*} \vert~~~~$ & $~~~~\vert  x_{2} -
x^{*} \vert~~~~$ & $~~~~\vert
x_{3} - x^{*} \vert~~~~$ &$~~~~\vert x_{4} -x^{*} \vert~~~~$ & ACOC\\
\hline
$(\ref{b1})$ &$(\ref{b11})$ & $~~0.444e-11$ &$0.399e-94~~$ & $0.170e-758$ & $0.189e-6073$& $8.0000$\\
$(\ref{b2})$ &$(\ref{b22})$ & $~~0.445e-11$ &$0.404e-94~~$ & $0.187e-758$ & $0.394e-6073$& $8.0000$ \\
$(\ref{b3})$ &$(\ref{b33})$ & $~~0.443e-11$ &$0.395e-94~~$ & $0.155e-758$ & $0.902e-6077$& $8.0000$ \\
$(\ref{d1})$ &$(\ref{d2})$ & $~~0.423e-12$ &$0.134e-114~~$ & $0.445e-1037$ & $0.211e-9339$& $9.0000$ \\
$(\ref{d3})$ &$(\ref{d4})$ & $~~0.225e-11$ &$0.629e-96~~$ & $0.265e-773$ & $0.264e-6192$& $8.0000$\\
$(\ref{d5})$ &$(\ref{d6})$ & $~~0.172e-11$ &$0.581e-98~~$ & $0.984e-790$ & $0.663e-6324$& $8.0000$\\
\hline
\end{tabular}\end{center}
\vspace*{-3ex} \caption{Comparison for
$f(x)=\ln(1-x+x^2)+4\sin(1-x), x^{*}=1, x_0=1.1$, for different
methods (M) and weight functions (W-F).\label{table2}}
\end{table}
\hspace{1cm}
\begin{table}[h!]
\begin{center}
\begin{tabular}{ c c c c c c c } \hline
M & W-F & $~~~~\vert x_{1} -x^{*} \vert~~~~$ & $~~~~\vert  x_{2} -
x^{*} \vert~~~~$ & $~~~~\vert
x_{3} - x^{*} \vert~~~~$ &$~~~~\vert x_{4} -x^{*} \vert~~~~$ & ACOC\\
\hline
$(\ref{b1})$ &$(\ref{b11})$ & $~~0.783e-8$ &$0.648e-64~~$ & $0.142e-512$ & $0.765e-4102$& $8.0000$\\
$(\ref{b2})$ &$(\ref{b22})$ & $~~0.749e-8$ &$0.656e-64~~$ & $0.855e-514$& $0.132e-4111$& $8.0000$ \\
$(\ref{b3})$ &$(\ref{b33})$ & $~~0.816e-8$ &$0.908e-64~~$ & $0.212e-511$& $0.187e-4092$& $8.0000$ \\
$(\ref{d1})$ &$(\ref{d2})$ & $~~0.673e-8$ &$0.113e-64~~$ & $0.726e-519$& $0.208e-4152$& $8.0000$ \\
$(\ref{d3})$ &$(\ref{d4})$& $~~0.997e-10$ &$0.751e-80~~$ & $0.782e-641$& $0.107e-5128$& $8.0000$\\
$(\ref{d5})$ &$(\ref{d6})$& $~~0.642e-10$ &$0.101e-81~~$ & $0.389e-656$& $0.184e-5251$& $8.0000$\\
\hline
\end{tabular}\end{center}
\vspace*{-3ex} \caption{Comparison for
$f(x)=x^4+\sin(\frac{\pi}{x^2})-5, x^{*}=\sqrt{2}, x_0=1.5$, for
different methods (M) and weight functions (W-F).\label{table3}}
\end{table}
\hspace{1cm}
\begin{table}[h!]
\begin{center}
\begin{tabular}{ c c c c c c c} \hline
M & W-F & $~~~~\vert x_{1} -x^{*} \vert~~~~$ & $~~~~\vert  x_{2} -
x^{*} \vert~~~~$ & $~~~~\vert
x_{3} - x^{*} \vert~~~~$ &$~~~~\vert x_{4} -x^{*} \vert~~~~$ & ACOC\\
\hline
$(\ref{b1})$ &$(\ref{b11})$ & $~~0.119e-3$ &$0.253e-26~~$ & $0.106e-207$ & $0.992e-1659$& $8.0000$\\
$(\ref{b2})$ &$(\ref{b22})$ & $~~0.143e-3$ &$0.109e-25~~$ & $0.124e-202$& $0.353e-1618$& $8.0000$ \\
$(\ref{b3})$ &$(\ref{b33})$ & $~~0.916e-4$ &$0.307e-27~~$ & $0.493e-215$& $0.221e-1717$& $8.0000$ \\
$(\ref{d1})$ &$(\ref{d2})$ & $~~0.183e-4$ &$0.319e-33~~$ & $0.278e-263$& $0.920e-2104$& $8.0000$ \\
$(\ref{d3})$ &$(\ref{d4})$& $~~0.612e-4$ &$0.111e-28~~$ & $0.134e-220$& $0.588e-1810$& $7.9999$\\
$(\ref{d5})$ &$(\ref{d6})$& $~~0.239e-4$ &$0.138e-32~~$ & $0.170e-258$& $0.938e-2066$& $8.0000$\\
\hline
\end{tabular}\end{center}
\vspace*{-3ex} \caption{Comparison for
$f(x)=(x-2)(x^{10}+x+1)e^{-x-1}, x^{*}=2, x_0=2.1$, for different
methods (M) and weight functions (W-F).\label{table4}}
\end{table}
\\
\\
\\
\\
\\
\\
\\
\\
\\
\\
\\
\\
\\
\\
In Tables \ref{table1}, \ref{table2}, \ref{table3} and
\ref{table4}, the proposed methods (\ref{b1}), (\ref{b2}) and
(\ref{b3}) with the methods (\ref{d1}), (\ref{d3}) and (\ref{d5})
have been tested on
 different nonlinear equations. It is clear that these
methods are in accordance with the developed theory.

\section{Conclusion}
We presented a new optimal class of three-point methods without
memory for approximating a simple root of a given nonlinear
equation. Our proposed methods use five function evaluations for
each iteration. Therefore they support Kung and Traub's
conjecture. Numerical examples show that our methods work and can
compete with other methods in the same class.


\begin{thebibliography}{99}

\bibitem{Bi} Bi, W., Ren, H., Wu, Q., Three-step iterative methods with
eighth-order convergence for solving nonlinear equations, J.
Comput. Appl. Math., 225, (2009), 105-112.

\bibitem{Chun1} Chun, C., Lee, M.Y., A new optimal eighth-order
family of iterative methods for the solution of nonlinear
equations, Appl. Math. Comput., 223, (2013), 506-519.

\bibitem{acoc}
Cordero, A., Torregrosa, J.R., Variants of Newton's method using
fifth-order quadrature formulas, Appl. Math. Comput., 190, (1),
686-698, (2007).

\bibitem{Cordero1} Cordero, A., Fardi, M., Ghasemi, M., Torregrosa, J.R., Accelerated
iterative methods for finding solutions of nonlinear equations and
their dynamical behavior, Calcolo, (2012), 1-14.

\bibitem{Hazrat}
 Hazrat, R., Mathematica�: A Problem-Centered Approach, Springer-Verlag, 2010.

\bibitem{Jarratt}
Jarratt, P., Some fourth order multipoint iterative methods for
solving equations, Math. Comput., 20, (1966), 434-437.

\bibitem{King}
King, R.F., Family of four order methods for nonlinear equations,
SIAM, Numer. Anal., 10, (1973), 876-879.

\bibitem{Kung}
 Kung, H.T., Traub, J.F., Optimal order of one-point and multipoint
  iteration, Assoc. Comput. Math., 21, (1974), 634-651.

\bibitem{Lotfi1}
Lotfi, T., Sharifi, S., Salimi, M., Siegmund, S., A new class of
three-point methods with optimal convergence order eight and its
dynamics, Numer. Algor., Published Online, (2014).


\bibitem{Lotfi2} Lotfi, T., Salimi, M., A note on the paper "A family of optimal iterative methods with fifth and tenth order convergence
for solving nonlinear equations", Communications in Numerical
Analysis 2013, 1-3, doi: 10.5899/2013/cna-00185, (2013).

\bibitem{Mah} Maheshwari, A.K., A fourth-order iterative method for solving
nonlinear equations, Appl. Math. Comput., 211, (2009),
383-391.


\bibitem{Ostrowski}
Ostrowski, A.M., Solution of Equations and Systems of Equations,
Academic Pres, New York, 1966.

\bibitem{Petkovic} Petkovic, M. S., Neta, B., Petkovic, L. D.,
Dzunic, J., Multipoint Methods for Solving Nonlinear Equations,
Elsevier, Waltham, MA, 2013.


\bibitem{Salimi1} Salimi, M., Lotfi, T., Sharifi, S., Siegmund, S., Optimal Newton-Secant like methods without memory for solving nonlinear
equations, Numer. Algor., Under Revision, (2014).


\bibitem{Sharma1}
Sharma, J.R., Sharma, R., A new family of modified Ostrowski's
methods with accelerated eighth order convergence, Numer. Algor.,
54, (2010), 445-458.

\bibitem{Traub}
Traub, J.F., Iterative Methods for the Solution of Equations,
Prentice Hall, New York, 1964.

\bibitem{Thukral}Thukral, R., Petkovic, M.S., A family of
three-point methods of optimal order for solving nonlinear
equations, J. Comput. Appl. Math., 233, (2010), 2278-2284.

\bibitem{Wang}
Wang, X., Liu, L., New eighth-order iterative methods for solving
nonlinear equations, J. Comput. Appl. Math., 234, (2010),
1611-1620.

\end{thebibliography}
\end{document}